\theoremstyle{plain}
\newtheorem{theorem}{Theorem}
\newtheorem*{theorem*}{Theorem}
\numberwithin{equation}{section}
\newcommand{\ii}{{\rm i}}
\newcommand{\dd}{{\rm dn}_2}
\newcommand{\oo}{\omega}
\newcommand{\OO}{\Omega}
\begin{document}

\title {Elliptic functions in signature four}

\date{}

\author[P.L. Robinson]{P.L. Robinson}

\address{Department of Mathematics \\ University of Florida \\ Gainesville FL 32611  USA }

\email[]{paulr@ufl.edu}

\subjclass{} \keywords{}

\begin{abstract}
We explore the relationships between two elliptic functions constructed by Shen in the signature four Ramanujan theory. 
\end{abstract}

\maketitle

\medbreak

Li-Chien Shen has introduced two separate elliptic functions in the Ramanujan theory of elliptic functions in signature four. In [2014] he constructed a partial analogue $\dd$ of the classical Jacobian elliptic function ${\rm dn}$, involving an incomplete integral of the hypergeometric function $F(\tfrac{1}{4}, \tfrac{3}{4}; \tfrac{1}{2}; \bullet)$ and a choice of modulus. In [2016] he constructed an elliptic function $y_4$ as the solution to a first-order initial value problem involving the Chebyshev polynomial $T_4$ and a choice of parameter. Here, we make explicit the relationship between these functions by making explicit the relationship between their coperiodic Weierstrass $\wp$ functions; as a by-product of our analysis, we identify the zeros and poles of the elliptic function $y_4$. 

\medbreak 

\section{The elliptic function $\dd$}

\medbreak 

The elliptic function $\dd$ is introduced in [2014] by means of an incomplete integral of the hypergeometric function $F(\tfrac{1}{4}, \tfrac{3}{4}; \tfrac{1}{2}; \bullet)$. We outline the construction here; for details, see [2014] and [2020]. 

\medbreak 

Fix $\kappa \in (0, 1)$ as modulus and $\lambda = \sqrt{1 - \kappa^2} \in (0, 1)$ as complementary modulus.

\medbreak 

The assignment 
$$T \mapsto \int_0^T F(\tfrac{1}{4}, \tfrac{3}{4}; \tfrac{1}{2}; \kappa^2 \sin^2 t) \, {\rm d} t$$ 
defines a strictly increasing bijection from $\mathbb{R}$ to itself, whose inverse we denote by $\phi: \mathbb{R} \to \mathbb{R}.$ Define $\oo$ by $\phi ( \oo) = \frac{1}{2} \pi$: thus 
$$\oo = \int_0^{\frac{1}{2} \pi} F(\tfrac{1}{4}, \tfrac{3}{4}; \tfrac{1}{2}; \kappa^2 \sin^2 t) \, {\rm d} t$$ 
and so 
$$\oo = \tfrac{1}{2} \pi F(\tfrac{1}{4}, \tfrac{3}{4}; 1; \kappa^2)$$
by a standard hypergeometric integral evaluation. An additional integral calculation shows that if $u \in \mathbb{R}$ then 
$$\phi (u + 2 \oo) = \phi(u) + \pi$$
whence the composite $\sin \phi : \mathbb{R} \to \mathbb{R}$ switches sign upon addition of $2 \oo$. Introduce the auxiliary function 
$$\psi = \arcsin (\kappa \sin \phi);$$ 
the cosine function being even, the composite $d: = \cos \psi : \mathbb{R} \to \mathbb{R}$ has period $2 \oo$. 

\medbreak 

Now $\dd$ is defined as the elliptic extension of $d$ to the plane. That $d$ so extends is an immediate consequence of its satisfying the following initial value problem. 

\medbreak 

\begin{theorem} \label{dn2}
The function $d = \cos \psi$ satisfies $d(0) = 1$ and the differential equation 
$$(d')^2 = 2 \, (1 - d) (d^2 - \lambda^2).$$
\end{theorem}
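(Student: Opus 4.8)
The plan is to keep everything at the level of the real function $d = \cos\psi$ and to reduce the asserted differential equation to two elementary trigonometric identities together with one classical hypergeometric evaluation. The initial condition is immediate: the defining integral of the inverse function $\phi$ vanishes at $T = 0$, so $\phi(0) = 0$, whence $\sin\psi(0) = \kappa\sin\phi(0) = 0$, and therefore $\psi(0) = 0$ and $d(0) = \cos 0 = 1$. Observe also that $|\kappa\sin\phi| \le \kappa < 1$ throughout, so $\psi$ takes values in $(-\tfrac12\pi, \tfrac12\pi)$; consequently $d = \cos\psi > 0$ and $\cos(\psi/2) > 0$ everywhere, which legitimizes the divisions made below.

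Next I would differentiate. Because $\phi$ is the inverse of $T \mapsto \int_0^T F(\tfrac14,\tfrac34;\tfrac12;\kappa^2\sin^2 t)\,\rd t$, we have $\phi' = 1/F(\tfrac14,\tfrac34;\tfrac12;\kappa^2\sin^2\phi)$; and from $\sin\psi = \kappa\sin\phi$ we get $\cos\psi\cdot\psi' = \kappa\cos\phi\cdot\phi'$. The one substantial ingredient is the quadratic hypergeometric identity $F(\tfrac14,\tfrac34;\tfrac12;\sin^2 z) = \cos(z/2)/\cos z$, the case $a = \tfrac14$ of $F(a, 1-a; \tfrac12; \sin^2 z) = \cos((2a-1)z)/\cos z$ (obtainable from the quadratic transformation of the hypergeometric function, or cited from [2014] or [2020]). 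Since $\kappa^2\sin^2\phi = \sin^2\psi$, applying this identity with $z = \psi$ gives $F(\tfrac14,\tfrac34;\tfrac12;\kappa^2\sin^2\phi) = \cos(\psi/2)/\cos\psi = \cos(\psi/2)/d$, hence $\phi' = d/\cos(\psi/2)$, and therefore $\psi' = \kappa\cos\phi\cdot\phi'/\cos\psi = \kappa\cos\phi/\cos(\psi/2)$.

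It remains to assemble the pieces. Differentiating $d = \cos\psi$ gives $d' = -\sin\psi\cdot\psi'$, so that $(d')^2 = \sin^2\psi\,(\psi')^2 = (1 - d^2)\,\kappa^2\cos^2\phi/\cos^2(\psi/2)$. Now rewrite the two surviving factors: $\kappa^2\cos^2\phi = \kappa^2 - \kappa^2\sin^2\phi = \kappa^2 - \sin^2\psi = \kappa^2 - (1 - d^2) = d^2 - \lambda^2$, using $\lambda^2 = 1 - \kappa^2$; and the half-angle formula gives $\cos^2(\psi/2) = (1 + d)/2$. Substituting, $(d')^2 = (1 - d^2)(d^2 - \lambda^2)\cdot 2/(1 + d) = 2(1 - d)(d^2 - \lambda^2)$, which is exactly the claimed equation. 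I expect the only genuine obstacle to be the hypergeometric evaluation and the attendant branch bookkeeping — keeping $\psi \in (-\tfrac12\pi, \tfrac12\pi)$ so that the square root $\sin\psi$ versus $\cos\psi$ and the half-angle cosine carry the correct signs globally in $u$; the remaining manipulations are routine.
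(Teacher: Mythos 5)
Your argument is correct and follows exactly the route the paper indicates: differentiation of $d=\cos\psi$ via the inverse-function relation for $\phi$, the hypergeometric evaluation $F(\tfrac14,\tfrac34;\tfrac12;\sin^2 z)=\cos(\tfrac12 z)/\cos z$ applied at $z=\psi$, and trigonometric duplication in the form $\cos^2(\tfrac12\psi)=\tfrac12(1+d)$. The paper merely sketches this and defers details to [2014] and [2020]; you have supplied those details, including the sign and branch bookkeeping, correctly.
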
 

\begin{proof} 
Aside from differentiation, the proof uses trigonometric duplication and the standard hypergeometric evaluation 
$$F(\tfrac{1}{4}, \tfrac{3}{4}; \tfrac{1}{2}; \sin^2 z) = \frac{\cos \frac{1}{2} z}{\cos z}$$
from page 101 of [1953]. For details, see [2014] or [2020]. 
\end{proof} 

\medbreak 

As announced, the fact that $d$ extends to an elliptic function $\dd$ is now in evidence: we can identify $\dd$ in terms of its coperiodic Weierstrass $\wp$ function, as follows.

\medbreak 

\begin{theorem} \label{p}
The elliptic function $\dd$ is given by 
$$(1 - \dd) (\tfrac{1}{3} + p) = \tfrac{1}{2} \kappa^2$$ 
where $p = \wp (\bullet; g_2, g_3)$ is the Weierstrass function with invariants 
$$g_2 = \tfrac{1}{3} (3 \lambda^2 + 1) \; \; {\rm and} \; \; g_3 = \tfrac{1}{27} (9 \lambda^2 - 1).$$  
\end{theorem}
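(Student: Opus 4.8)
The strategy is to convert the first-order ODE for $d$ from Theorem \ref{dn2} into the standard Weierstrass differential equation via a rational change of variable, then match constants.

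First, I would set up the substitution. Since Theorem \ref{p} proposes $(1-\dd)(\tfrac13 + p) = \tfrac12\kappa^2$, I would solve for $\dd$ in terms of $p$, namely $\dd = 1 - \tfrac{\kappa^2}{2(\frac13 + p)}$, so that $1 - \dd = \tfrac{\kappa^2}{2(\frac13+p)}$. Equivalently, introduce $w := \tfrac13 + p$ so that $1 - d = \tfrac{\kappa^2}{2w}$ and $d = 1 - \tfrac{\kappa^2}{2w}$; then $d - \lambda = (1-\lambda) - \tfrac{\kappa^2}{2w}$ and $d+\lambda = (1+\lambda) - \tfrac{\kappa^2}{2w}$, using $\kappa^2 = 1-\lambda^2 = (1-\lambda)(1+\lambda)$. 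I would then differentiate $1 - d = \tfrac{\kappa^2}{2w}$ to get $d' = \tfrac{\kappa^2}{2w^2} w'$ and substitute into $(d')^2 = 2(1-d)(d^2-\lambda^2) = 2(1-d)(d-\lambda)(d+\lambda)$.

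Next, I would carry out the algebra: the left side becomes $\tfrac{\kappa^4}{4w^4}(w')^2$, and the right side, after clearing the common factors of $\kappa^2/(2w)$, should collapse to a cubic in $w$ divided by $w^3$ — more precisely I expect $(w')^2 = 4w^3 - g_2 w - g_3$ with the claimed invariants, or possibly $(w')^2$ equal to that cubic up to a normalization I would absorb. Here I would use $w = p + \tfrac13$ to rewrite $4w^3 - g_2 w - g_3$ in terms of $p$: expanding $4(p+\tfrac13)^3 = 4p^3 + 4p^2 + \tfrac43 p + \tfrac{4}{27}$, so that $4w^3 - g_2 w - g_3$ should reduce to $4p^3 - \tilde g_2 p - \tilde g_3$ for the actual Weierstrass invariants, and matching these against $g_2 = \tfrac13(3\lambda^2+1)$, $g_3 = \tfrac1{27}(9\lambda^2 - 1)$ is the bookkeeping step. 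I would double-check consistency by computing the discriminant $g_2^3 - 27 g_3^2$ and confirming it is nonzero for $\lambda \in (0,1)$, so that $p$ is genuinely an elliptic (not degenerate) function.

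Finally, I would pin down that the particular solution $d$ corresponds to the particular Weierstrass function with the right periods and the right location of the pole: the relation forces $\dd \to -\infty$-type behavior to be traded for $p$ having a double pole exactly where $\dd$ would — but in fact $\dd$ is finite, so I must instead check that $w = \tfrac13 + p$ never vanishes on the real axis and that the initial condition $d(0) = 1$ corresponds to $w(0) = \infty$, i.e. $p$ has a pole at $0$, which is the standard normalization of $\wp$. Matching $d(0)=1$ with $1 - d = \kappa^2/(2w) \to 0$ confirms the pole of $p$ at the origin is the correct base point, and the coperiodicity (period $2\omega$ for $d$ matching a period of $p$) follows since both satisfy the same ODE with the same initial data once the branch of the square root is fixed. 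The main obstacle I anticipate is purely computational: correctly expanding the cubic and verifying that the cross terms conspire to produce exactly $g_2$ and $g_3$ as stated — there is no conceptual difficulty, but the constant-matching must come out precisely, and a sign error in $\kappa^2 = 1-\lambda^2$ versus the $d\pm\lambda$ factors would throw it off.
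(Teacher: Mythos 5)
Your plan is correct and is in substance the paper's own argument: the paper proves Theorem \ref{p} by invoking the general Theorem in the Appendix, whose proof is precisely your substitution $p + \tfrac13 = \kappa^2/\bigl(2(1-d)\bigr)$ (i.e. $p = A_3(w-w_0)^{-1} + \tfrac12 A_2$ with $w_0 = 1$) reducing the cubic equation $(d')^2 = 2(1-d)(d^2-\lambda^2)$ to $(p')^2 = 4p^3 - g_2 p - g_3$, with the pole at the origin fixing $p = \wp$. The constants do come out exactly as you expect ($\tfrac43 - \kappa^2 = g_2$, $\tfrac8{27} - \tfrac{\kappa^2}3 = g_3$), so the only difference is that you carry out the specialized computation directly rather than citing the general statement.
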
 

\begin{proof} 
A straightforward application of the Theorem in the Appendix. 
\end{proof} 

\medbreak 

\medbreak  

The invariants $g_2, g_3$ being real and the discriminant $g_2^3 - 27 g_3^2 = \kappa^4 \lambda^2$ being positive, the Weierstrass function $p = \wp (\bullet; g_2, g_3)$ has a fundamental pair of periods in which the first period is strictly positive and the second period is purely imaginary with strictly positive imaginary part: the first of these is precisely $2 \oo$; the second we shall denote by $2 \oo '$. It is a familiar property of such Weierstrass functions that as the perimeter of the half-period rectangle is traversed in the counterclockwise order 
$$0, \, \oo, \, \oo + \oo', \, \oo ', \, 0$$ 
the function $p$ strictly decreases through all real values, being critical at the nonzero vertices, the (midpoint) values there being in this case 
$$p(\oo) = \tfrac{1}{6} + \tfrac{1}{2} \lambda, \, p(\oo + \oo') = \tfrac{1}{6} - \tfrac{1}{2} \lambda, \, p(\oo') = - \tfrac{1}{3}.$$ 

\medbreak 

Theorem \ref{p} makes it clear that $p$ and $\dd$ are coperiodic; accordingly, we may use $\dd$ to derive expressions for $\oo$ (again) and $\oo '$. For this purpose, we observe that 
$$\dd (\oo) = \lambda \; \; {\rm and} \; \; \dd(\oo + \oo ') = - \lambda$$ 
while  $\dd$ has a pole at $\oo '$. 

\medbreak 

Define the acute modular angle $\alpha  \in (0, \tfrac{1}{2} \pi)$ by $\kappa = \sin \alpha$ so that also $\lambda = \cos \alpha$. 

\medbreak 

\begin{theorem} \label{ppitrig}
The real half-period $\oo$ of $\dd$ and $p$ is given by 
$$\oo = \sqrt2 \int_0^{\alpha} \frac{\cos \frac{1}{2} \theta}{\sqrt{\cos 2 \theta - \cos 2 \alpha}} \, {\rm d} \theta.$$
\end{theorem}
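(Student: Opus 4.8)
The plan is to use Theorem~\ref{dn2} to express $\oo$ as the period integral attached to the differential equation satisfied by $d$, and then to reduce that integral to the stated form by the elementary substitution $x = \cos\theta$. The identification $\oo = \int_0^{\frac12\pi} F(\tfrac14,\tfrac34;\tfrac12;\kappa^2\sin^2 t)\,{\rm d}t$ will play no further role; everything is driven by the ODE $(d')^2 = 2(1-d)(d^2-\lambda^2)$ together with $d(0)=1$.

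First I would pin down the behaviour of $d$ on $[0,\oo]$. Since $\phi$ is a strictly increasing bijection of $\R$ with $\phi(0)=0$ and $\phi(\oo)=\tfrac12\pi$, the composite $\sin\phi$ increases strictly from $0$ to $1$ on $[0,\oo]$; hence $\psi = \arcsin(\kappa\sin\phi)$ increases strictly from $0$ to $\arcsin\kappa = \alpha$, and therefore $d = \cos\psi$ \emph{decreases} strictly from $d(0)=1$ to $d(\oo) = \cos\alpha = \lambda$. In particular $t\mapsto d(t)$ is a strictly decreasing bijection from $[0,\oo]$ onto $[\lambda,1]$, with $d' < 0$ throughout $(0,\oo)$; this incidentally recovers the value $\dd(\oo)=\lambda$ recorded earlier.

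Next I would separate variables in $(d')^2 = 2(1-d)(d-\lambda)(d+\lambda)$. Using $d'<0$ and the bijection just described,
$$\oo = \int_0^\oo {\rm d}t = \int_\lambda^1 \frac{{\rm d}x}{\sqrt{2(1-x)(x-\lambda)(x+\lambda)}},$$
the endpoint singularities being of integrable inverse-square-root type. Finally substitute $x = \cos\theta$, so that $\theta$ runs from $0$ (where $x=1$) to $\alpha$ (where $x=\cos\alpha=\lambda$): here ${\rm d}x = -2\sin\tfrac12\theta\,\cos\tfrac12\theta\,{\rm d}\theta$, $1-x = 2\sin^2\tfrac12\theta$, and the duplication formula gives $x^2-\lambda^2 = \cos^2\theta - \cos^2\alpha = \tfrac12(\cos2\theta - \cos2\alpha)$. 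Substituting these and cancelling the common factor $\sin\tfrac12\theta$ leaves precisely $\oo = \sqrt2\int_0^\alpha \cos\tfrac12\theta\,(\cos2\theta-\cos2\alpha)^{-1/2}\,{\rm d}\theta$.

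Essentially the only point demanding care is the legitimacy of the change of variable $t \mapsto x = d(t)$, i.e. the strict monotonicity and the sign of $d'$ established in the second step; once those are in hand, the rest is routine manipulation of elementary identities. (One could instead begin from the classical half-period integral $\oo = \int_{e_1}^{\infty}(4t^3 - g_2 t - g_3)^{-1/2}\,{\rm d}t$ with largest root $e_1 = p(\oo) = \tfrac16 + \tfrac12\lambda$ furnished by Theorem~\ref{p}, and arrive at the same trigonometric integral after an analogous substitution; but with Theorem~\ref{dn2} already available, the route through $d$ is the shorter one.)
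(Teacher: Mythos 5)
Your proposal is correct and follows essentially the same route as the paper: separate variables in the differential equation of Theorem~\ref{dn2} along $[0,\oo]$, where $d$ decreases from $1$ to $\lambda$, to get $\oo = \int_\lambda^1 \bigl(2(1-x)(x^2-\lambda^2)\bigr)^{-1/2}\,{\rm d}x$, and then substitute $x=\cos\theta$ with the half-angle and duplication identities. Your extra care in justifying the strict monotonicity of $d$ (via $\phi$ and $\psi$) is a sound addition that the paper leaves implicit, but it does not change the argument.
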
 

\begin{proof} 
We integrate along the lower edge $[0, \oo]$ of the half-period rectangle. As $\dd$ decreases from value $1$ at $0$ to value $\lambda$ at $\oo$ it follows from the differential equation in Theorem \ref{dn2} that 
$$\dd ' = - \sqrt{2 (1 - \dd) (\dd^2 - \lambda^2)}$$
and 
$$\oo = \int_{\lambda}^1 \frac{1}{\sqrt{2 (1 - x) (x^2 - \lambda^2)}} \, {\rm d} x.$$
Here, substitute $x = \cos \theta$ for $0 \leqslant \theta \leqslant \alpha$; observe that  
$$\frac{{\rm d} x}{{\rm d} \theta} = - \sin \theta = - 2 \sin \tfrac{1}{2} \theta \cos \tfrac{1}{2} \theta$$ 
and 
$$1 - x = 1 - \cos \theta = 2 \sin^2 \tfrac{1}{2} \theta$$ 
while 
$$x^2 - \lambda^2 = \cos^2 \theta - \cos^2 \alpha = \tfrac{1}{2} (\cos 2 \theta - \cos 2 \alpha)$$
to complete the proof. 

\end{proof} 

\medbreak 

We may confirm that this value for $\oo$ agrees with its original definition: 
$$\oo = \int_0^{\frac{1}{2} \pi} F(\tfrac{1}{4}, \tfrac{3}{4}; \tfrac{1}{2}; \kappa^2 \sin^2 \phi) \, {\rm d} \phi$$ 
in which we have written $\phi$ for the integration variable. Change the integration variable to $\psi = \arcsin (\kappa \sin \phi)$: the hypergeometric evaluation that was employed in Theorem \ref{dn2} gives 
$$F(\tfrac{1}{4}, \tfrac{3}{4}; \tfrac{1}{2}; \kappa^2 \sin^2 \phi) = F(\tfrac{1}{4}, \tfrac{3}{4}; \tfrac{1}{2}; \sin^2 \psi) = \frac{\cos \frac{1}{2} \psi}{\cos \psi}$$
while the chain rule gives 
$$\frac{{\rm d} \phi}{{\rm d} \psi} = \frac{\cos \psi}{\kappa \cos \phi} = \frac{\cos \psi}{\sqrt{\kappa^2 - \sin^2 \psi}} = \frac{\cos \psi}{\sqrt{\cos^2 \psi - \cos^2 \alpha}} = \sqrt2 \frac{\cos \psi}{\sqrt{\cos 2 \psi - \cos 2 \alpha}}.$$

\medbreak 

It is convenient to record the following evaluation. 

\medbreak 

\begin{theorem} \label{eval}
$$\int_0^{\alpha} \frac{\cos \frac{1}{2} \theta}{\sqrt{\cos 2 \theta - \cos 2 \alpha}} \, {\rm d} \theta = \tfrac{1}{2 \sqrt2} \pi F(\tfrac{1}{4}, \tfrac{3}{4}; 1 ; \sin^2 \alpha).$$ 
\end{theorem}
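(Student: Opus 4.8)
The plan is to obtain the evaluation simply by comparing two expressions for the real half-period $\oo$ that are already available. On the one hand, Theorem \ref{ppitrig} gives
$$\oo = \sqrt2 \int_0^{\alpha} \frac{\cos \frac{1}{2}\theta}{\sqrt{\cos 2\theta - \cos 2\alpha}}\,{\rm d}\theta.$$
On the other hand, the construction of $\dd$ recorded at the start of the section gives $\oo = \tfrac12 \pi F(\tfrac14,\tfrac34;1;\kappa^2)$, and $\kappa = \sin\alpha$ by the definition of the modular angle. Equating the two and dividing through by $\sqrt2$ is exactly the asserted identity, so on this route there is nothing further to compute.

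Should a proof independent of Theorem \ref{ppitrig} be wanted, I would instead run the substitution of the confirmation paragraph in reverse. Starting from the left side, put $\theta = \psi = \arcsin(\kappa \sin \phi)$, so that $\theta = 0$ corresponds to $\phi = 0$ and $\theta = \alpha$ to $\phi = \tfrac12 \pi$; a short calculation gives $\cos^2\theta - \cos^2\alpha = \kappa^2 - \sin^2\theta = \kappa^2\cos^2\phi$, hence $\cos 2\theta - \cos 2\alpha = 2\kappa^2\cos^2\phi$, together with ${\rm d}\theta = (\kappa\cos\phi/\cos\theta)\,{\rm d}\phi$. The factors of $\kappa\cos\phi$ cancel, leaving
$$\int_0^{\alpha} \frac{\cos \frac{1}{2}\theta}{\sqrt{\cos 2\theta - \cos 2\alpha}}\,{\rm d}\theta = \frac{1}{\sqrt2}\int_0^{\pi/2}\frac{\cos\frac12\theta}{\cos\theta}\,{\rm d}\phi.$$
The hypergeometric evaluation $F(\tfrac14,\tfrac34;\tfrac12;\sin^2\theta) = \cos\tfrac12\theta/\cos\theta$ invoked in Theorem \ref{dn2}, combined with $\sin^2\theta = \kappa^2\sin^2\phi$, recasts the integrand as $F(\tfrac14,\tfrac34;\tfrac12;\kappa^2\sin^2\phi)$, and the same standard hypergeometric integral evaluation used to pass from the defining integral for $\oo$ to $\tfrac12\pi F(\tfrac14,\tfrac34;1;\kappa^2)$ then completes the argument.

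The only point needing any care is the extraction of the square root $\sqrt{2\kappa^2\cos^2\phi} = \sqrt2\,\kappa\cos\phi$, which is valid because $\phi$ ranges over $[0,\tfrac12\pi]$, where $\cos\phi \geqslant 0$; likewise $\cos\theta > 0$ since $\theta$ stays in $[0,\alpha] \subset [0,\tfrac12\pi)$. Beyond this sign bookkeeping there is no real obstacle, since the identity is just a repackaging of a formula for $\oo$ that has by now been derived in two independent ways.
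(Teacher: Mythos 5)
Your first paragraph is exactly the paper's proof: Theorem \ref{ppitrig} combined with the earlier evaluation $\oo = \tfrac{1}{2}\pi F(\tfrac{1}{4},\tfrac{3}{4};1;\kappa^2)$ and $\kappa=\sin\alpha$. The alternative substitution argument you sketch is also sound (it is the paper's ``confirmation'' computation run in reverse), but it is not needed.
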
 

\begin{proof} 
Immediate from Theorem \ref{ppitrig} and the earlier standard hypergeometric integral evaluation.
\end{proof} 

\medbreak 

Write $\beta = \frac{1}{2} - \alpha$ for the complementary modular angle: so $\sin \beta = \lambda$ and $\cos \beta = \kappa$.  

\medbreak 

\begin{theorem} \label{ppi'}
The imaginary half-period $\oo '$ of $\dd$ and $p$ is given by 
$$\oo ' = 2 \ii \int_0^{\beta} \frac{\cos \frac{1}{2} \theta}{\sqrt{\cos 2 \theta - \cos 2 \beta}} \, {\rm d} \theta.$$
\end{theorem}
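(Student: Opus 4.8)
The plan is to follow the template of Theorem~\ref{ppitrig}, integrating $\dd$ this time along the right edge $[\oo, \oo + \oo']$ of the half-period rectangle (the left edge $[0, \oo']$ would work too, but leads to a less convenient improper integral). Three preparatory remarks: first, the differential equation $(\dd')^2 = 2(1 - \dd)(\dd^2 - \lambda^2)$ of Theorem~\ref{dn2}, both of whose sides are meromorphic, persists by analytic continuation throughout the plane; second, $\dd$ is real on the vertical line through $\oo$, since $\dd$ is even, has the real period $2\oo$, and satisfies the Schwarz reflection $\overline{\dd(\bar t)} = \dd(t)$ (its power series at the origin having real coefficients), whence $\overline{\dd(\oo + \ii s)} = \dd(\oo - \ii s) = \dd(\oo + \ii s)$ for real $s$; third, Theorem~\ref{p} together with the recorded midpoint values $p(\oo) = \tfrac{1}{6} + \tfrac{1}{2}\lambda$ and $p(\oo + \oo') = \tfrac{1}{6} - \tfrac{1}{2}\lambda$ shows that $\dd$ decreases strictly and monotonically from $\dd(\oo) = \lambda$ to $\dd(\oo + \oo') = - \lambda$ along this edge, staying finite.

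Writing $t = \oo + \ii s$ and $D(s) = \dd(t)$, the chain rule and the continued differential equation give $(D')^2 = - (\dd')^2 = 2(1 - D)(\lambda^2 - D^2)$, which is nonnegative along the edge because there $D \in [- \lambda, \lambda]$ while $\lambda < 1$. Since $D$ decreases we take $D' = - \sqrt{2(1 - D)(\lambda^2 - D^2)}$, and separating variables while letting $D$ run from $\lambda$ down to $-\lambda$ — the orientation being fixed by the requirement that $2\oo'$ have positive imaginary part — produces
$$\oo' = \ii \int_{-\lambda}^{\lambda} \frac{{\rm d} x}{\sqrt{2 (1 - x)(\lambda^2 - x^2)}}.$$

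It then remains to recast this real integral in trigonometric form, in two moves. First substitute $x = \cos \theta$ exactly as in Theorem~\ref{ppitrig}, so that $1 - x = 2 \sin^2 \tfrac{1}{2}\theta$ and ${\rm d} x = - 2 \sin \tfrac{1}{2}\theta \cos \tfrac{1}{2}\theta \, {\rm d}\theta$ together yield the factor $\cos \tfrac{1}{2}\theta$; here $x$ running from $-\lambda$ to $\lambda$ corresponds to $\theta$ running from $\pi - \alpha$ down to $\alpha$, and $\lambda^2 - \cos^2 \theta = \cos^2 \alpha - \cos^2 \theta$, so that $\oo' = \ii \int_{\alpha}^{\pi - \alpha} \cos \tfrac{1}{2}\theta \, (\cos^2 \alpha - \cos^2 \theta)^{-1/2} \, {\rm d}\theta$. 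Second, centre the interval about $\tfrac{1}{2}\pi$ by $\theta = \tfrac{1}{2}\pi + \vartheta$: the limits become $-\beta$ and $\beta$ because $\beta = \tfrac{1}{2}\pi - \alpha$, one has $\cos^2 \alpha - \cos^2 \theta = \sin^2 \beta - \sin^2 \vartheta = \tfrac{1}{2}(\cos 2 \vartheta - \cos 2 \beta)$, and $\cos \tfrac{1}{2}\theta = \tfrac{1}{\sqrt 2}(\cos \tfrac{1}{2}\vartheta - \sin \tfrac{1}{2}\vartheta)$; the term in $\sin \tfrac{1}{2}\vartheta$ is odd and integrates to zero over $[-\beta, \beta]$, while the term in $\cos \tfrac{1}{2}\vartheta$ is even, so the claimed formula with its factor $2\ii$ emerges.

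The only real obstacle I anticipate is bookkeeping: getting the prefactor to be $2\ii$ rather than $-\ii$ or $\ii$. This hinges on orientation — since $2\oo'$ has positive imaginary part, the parameter $s$ increases as $t$ runs from $\oo$ to $\oo + \oo'$, which is what forces $D$ to decrease and so selects the minus sign in $D' = - \sqrt{2(1 - D)(\lambda^2 - D^2)}$ — and on the reality of $\dd$ along the whole closed edge, established above; the endpoints $\oo, \oo + \oo'$, where $\dd = \pm\lambda$ and $D' = 0$, correspond precisely to the integrable singularities $x = \pm\lambda$ of the integrand, so no separate convergence argument is needed. Everything downstream of the displayed integral for $\oo'$ is trigonometric routine of the same kind already executed in Theorem~\ref{ppitrig}.
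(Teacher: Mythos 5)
Your proposal is correct and follows essentially the same route as the paper: integrate $\dd$ up the right edge $[\oo, \oo + \oo']$, use the continued differential equation with the sign flip from $t \mapsto \oo + \ii t$ to get $\oo' = \ii \int_{-\lambda}^{\lambda} \bigl(2(1-x)(\lambda^2 - x^2)\bigr)^{-1/2}\,{\rm d}x$, substitute $x = \cos\theta$, and then recentre the interval about $\tfrac{1}{2}\pi$ and kill the odd term by parity to obtain the factor $2\ii$. Your extra remarks (analytic continuation of the differential equation, reality of $\dd$ on the edge, monotonicity via the midpoint values of $p$) merely make explicit what the paper leaves tacit, and your substitution $\theta = \tfrac{1}{2}\pi + \vartheta$ differs from the paper's $t = \tfrac{1}{2}\pi - \theta$ only by a sign in the new variable.
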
 

\begin{proof} 
Integrate up the right edge $[\oo, \oo + \oo ']$ of the half-period rectangle: for $0 \leqslant t \leqslant - \ii \, \oo '$ let $g(t) = \dd (\oo + \ii t)$. Then 
$$(g ')^2 = - 2 (1 - g)(g^2 - \lambda^2)$$
and $g$ decreases from $g(0) = \lambda$ to $g(- \ii \, \oo ') = - \lambda$ so that 
$$ g ' = - \sqrt{2 (1 - g)(\lambda^2 - g^2)}$$ 
and therefore 
$$- \ii \, \oo ' = \int_{- \lambda}^{\lambda} \frac{1}{\sqrt{2 (1 - x)(\lambda^2 - x^2)}} \, {\rm d} x.$$
Substitute $x = \cos t$ for $\alpha \leqslant t \leqslant \pi - \alpha$ and again use trigonometric duplication to see that 
$$- \ii \, \oo ' = \sqrt2 \, \int_{\alpha}^{\pi - \alpha} \frac{\cos \frac{1}{2} t}{\sqrt{\cos 2 \alpha - \cos 2 t}} \, {\rm d} t.$$
Substitute $t = \tfrac{1}{2} \pi - \theta$ and use 
$\cos \tfrac{1}{2} t = \tfrac{1}{\sqrt2} (\cos \tfrac{1}{2} \theta + \sin \tfrac{1}{2} \theta)$ along with trigonometric parity to arrive at 
$$- \ii \, \oo ' = 2  \int_0^{\beta} \frac{\cos \frac{1}{2} \theta}{\sqrt{\cos 2 \theta - \cos 2 \beta}} \, {\rm d} \theta.$$
\end{proof} 

\medbreak 

Taken together, Theorem \ref{eval} and Theorem \ref{ppi'} show that 
$$\oo ' = \tfrac{1}{\sqrt2} \pi F(\tfrac{1}{4}, \tfrac{3}{4}; 1 ; \lambda^2).$$ 

\medbreak 

We now see the shape of the period rectangle: the period ratio is given by 
$$\frac{\oo '}{\oo} = \ii \, \sqrt2 \; \frac{F(\tfrac{1}{4}, \tfrac{3}{4}; 1 ; \lambda^2)}{F(\tfrac{1}{4}, \tfrac{3}{4}; 1; \kappa^2)} = \ii \, \sqrt2 \; \frac{F(\tfrac{1}{4}, \tfrac{3}{4}; 1 ; 1 - \kappa^2)}{F(\tfrac{1}{4}, \tfrac{3}{4}; 1; \kappa^2)}.$$

\medbreak 

\section{The elliptic function $y_4$}

\medbreak 

The elliptic function $y_4$ is introduced in [2016] as the solution to a specific initial value problem involving the degree-four Chebyshev polynomial (`of the first kind'); recall that this polynomial $T_4$ is given by $T_4 (t) = 8 t^4 - 8 t^2 + 1$ and derives its significance from the property $T_4 (\cos \theta) = \cos 4 \theta$. 

\medbreak 

To be explicit, we shall consider the differential equation 
$$(y ')^2 = T_4 (y) - (1 - 2 \lambda^2) = 8 y^4 - 8 y^2 + 2 \lambda^2$$ 
in which we have deliberately taken the parameter $\mu$ of [2016] to be the modulus $\lambda = \sqrt{1 - \kappa^2}$ that is complementary to $\kappa \in (0, 1)$. The quartic on the right-hand side of this equation has four distinct real zeros: namely 
$$\mu^{\pm} = \sqrt{\tfrac{1}{2} (1 \pm \kappa)} \; \; {\rm and} \; \; - \mu^{\pm}.$$
We augment the differential equation by imposing the initial condition that $y(0)$ should be one of these four zeros: write $y^+_4$ for the solution that satisfies $y^+_4(0) = \mu^+$ and $y^-_4$ for the solution that satisfies $y^-_4(0) = \mu^-$; then $- y^+_4$ and $- y^-_4$ are the solutions that satisfy the other two initial conditions. We remark that our $y^+_4$ is the solution $y_4$ upon which the signature-four account in [2016] is based. 

\medbreak 

We may identify $y^+_4$ in terms of its coperiodic Weierstrass $\wp$ function as follows. 

\medbreak 

\begin{theorem} \label{y4}
The elliptic function $y^+_4$ is given by 
$$y^+_4 = \sqrt{\tfrac{1}{2} (1 + \kappa)} \, \Big[ 1 + \frac{4 \kappa}{P - (\frac{4}{3} + 2 \kappa)} \Big]$$
where $P = \wp(\bullet; G_2, G_3)$ is the Weierstrass function with invariants 
$$G_2 = \tfrac{16}{3} (1 + 3 \lambda^2) \; \; {\rm and} \; \; G_3 = \tfrac{64}{27} (1 - 9 \lambda^2).$$
\end{theorem}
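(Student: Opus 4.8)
The plan is to mimic the proof of Theorem \ref{p}: exhibit a rational change of variable that converts the first-order ODE satisfied by $y^+_4$ into the Weierstrass ODE $(P')^2 = 4P^3 - G_2 P - G_3$, and then invoke the Theorem in the Appendix (which presumably asserts that a function satisfying an ODE whose right-hand side is a cubic or quartic, with a suitable initial condition, is a rational function of a coperiodic $\wp$). Concretely, write $y = \sqrt{\tfrac12(1+\kappa)}\,(1 + 4\kappa/(P - c))$ with $c = \tfrac43 + 2\kappa$ as the conjectured substitution, so that $P = c + 4\kappa\big/\big(y/\sqrt{\tfrac12(1+\kappa)} - 1\big)$; I would then compute $(P')^2$ in terms of $y$ and $y'$, substitute $(y')^2 = 8y^4 - 8y^2 + 2\lambda^2 = 8(y^2 - (\mu^+)^2)(y^2 - (\mu^-)^2)$, and check that the result is the cubic $4P^3 - G_2 P - G_3$ with the stated invariants. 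Equivalently, and more in the spirit of the Appendix, I would start from $\wp$, set $u = y/\sqrt{\tfrac12(1+\kappa)}$, note $P - c = 4\kappa/(u-1)$ so $P = c + 4\kappa/(u-1)$ and $P' = -4\kappa u'/(u-1)^2$, and verify the polynomial identity
$$\Big(\frac{4\kappa u'}{(u-1)^2}\Big)^2 = 4(P-e_1)(P-e_2)(P-e_3)$$
after clearing denominators, where $e_1, e_2, e_3$ are the roots of $4t^3 - G_2 t - G_3$.

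The key computational checkpoints, in order: first, factor the quartic on the right of the $y_4$-equation as $8(y^2-(\mu^+)^2)(y^2-(\mu^-)^2)$ using $(\mu^\pm)^2 = \tfrac12(1\pm\kappa)$, and rewrite it in the variable $u = y/\mu^+$ as $8(\mu^+)^4 u^2\big(u^2 - (\mu^-/\mu^+)^2\big)$ — note $(\mu^-/\mu^+)^2 = (1-\kappa)/(1+\kappa)$; second, under $u - 1 = 4\kappa/(P-c)$ one has $u = (P - c + 4\kappa)/(P-c)$ and $u^2 - (\mu^-/\mu^+)^2$ becomes a rational function of $P$ whose numerator, after using $(\mu^-/\mu^+)^2(P-c)^2 = \cdots$, should factor as a product of two linear factors in $P$; third, assemble $(u')^2 = (\mu^+)^{-2}(y')^2$, push it through $P' = -4\kappa u'/(u-1)^2$, clear the common denominator $(u-1)^4 = (4\kappa)^4/(P-c)^4$, and collect to obtain a monic-up-to-constant cubic in $P$; fourth, match that cubic against $4P^3 - G_2 P - G_3$ with $G_2 = \tfrac{16}{3}(1+3\lambda^2)$, $G_3 = \tfrac{64}{27}(1-9\lambda^2)$, using $\lambda^2 = 1 - \kappa^2$ to eliminate $\lambda$ in favor of $\kappa$. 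A useful sanity check along the way: the three finite values of $P$ corresponding to $y = \mu^+$ (i.e.\ $u=1$, giving $P = \infty$ — so actually the branch points of $y$ other than $\mu^+$), namely $y = \mu^-, -\mu^+, -\mu^-$, must map to $e_1, e_2, e_3$; in particular $y = -\mu^+$ gives $u = -1$, hence $P = c - 2\kappa = \tfrac43$, which should therefore be a root of $4t^3 - G_2 t - G_3$ — and one checks $4\cdot\tfrac{64}{27} - \tfrac{16}{3}(1+3\lambda^2)\cdot\tfrac43 - \tfrac{64}{27}(1-9\lambda^2) = 0$ identically, confirming the normalization.

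The main obstacle I anticipate is purely bookkeeping: after clearing the fourth-power denominator there will be a degree-up-to-six polynomial in $P$ that must collapse to a cubic, so the coefficients of $P^6, P^5, P^4$ and the $P^0$–$P^3$ discrepancies all have to vanish, and getting the $\kappa$-arithmetic exactly right (especially the interplay between $\lambda^2 = 1-\kappa^2$ and the fractions $\tfrac43$, $2\kappa$, $\tfrac{16}{3}$, $\tfrac{64}{27}$) is where sign or factor-of-two slips would hide. There is also the non-computational point of justifying that matching the ODEs plus one value is enough to conclude equality of the \emph{elliptic} functions rather than merely germs at $0$: this is exactly what the Appendix theorem is for — $y^+_4$ extends to an elliptic function because the right-hand side of its ODE is a quartic with distinct roots, its coperiodic $\wp$ is determined by the invariants read off from that quartic via the standard reduction, and the Möbius relation between $y^+_4$ and $P$ is pinned down by the initial value $y^+_4(0) = \mu^+$ together with the location of the double point. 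So the real work is the algebra, and I would organize it by first doing everything symbolically in $\kappa$ alone, checking the four root-correspondences as independent verifications, and only then confirming the two invariant formulas.
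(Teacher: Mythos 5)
Your proposal is correct and takes essentially the same route as the paper, which proves this theorem by direct appeal to the Theorem in the Appendix: that theorem already packages the quartic-to-Weierstrass reduction you propose to redo by hand, asserting $w = w_0 + \tfrac{1}{4}f'(w_0)\big/\big(\wp(\cdot;g_2,g_3) - \tfrac{1}{24}f''(w_0)\big)$ with $g_2, g_3$ the quadrinvariant and cubinvariant of the quartic $f$. Applying it with $f(w) = 8w^4 - 8w^2 + 2\lambda^2$ and $w_0 = \mu^+$ (so that $\tfrac{1}{4}f'(\mu^+) = 4\kappa\mu^+$, $\tfrac{1}{24}f''(\mu^+) = \tfrac{4}{3} + 2\kappa$, and, from $a_0 = 8$, $a_1 = a_3 = 0$, $a_2 = -\tfrac{4}{3}$, $a_4 = 2\lambda^2$, the invariants $G_2 = \tfrac{16}{3}(1+3\lambda^2)$ and $G_3 = \tfrac{64}{27}(1-9\lambda^2)$) yields the stated formula at once, so the degree-six bookkeeping you anticipate is unnecessary.
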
 

\begin{proof} 
Another application of the Theorem in the Appendix.
\end{proof} 

\medbreak 

As was the case for $\dd$ and its coperiodic Weierstrass function, $y^+_4$ and $P$ share a rectangular period lattice. We shall denote by $(2 \OO, 2 \OO ')$ their fundamental pair of periods for which $\OO$ and $- \, \ii \OO '$ are strictly positive; their precise values will be made explicit in Theorem \ref{ooOO}. We note that the midpoint values of $P$ are 
$$P(\OO) = \tfrac{4}{3}, \, P(\OO + \OO ') = - \tfrac{2}{3} + 2 \lambda, \, P(\OO ') = - \tfrac{2}{3} - 2 \lambda$$
and the corresponding values of $y^+_4$ with $y^+_4 (0) = \mu^+$ are 
$$y^+_4(\OO) = - \mu^+, \; y^+_4(\OO + \OO ') = - \mu^-, \; y^+_4 (\OO ') = \mu^-.$$ 

\medbreak 

The solutions $y^-_4, \, - y^+_4$ and $- y^-_4$ may likewise be expressed in terms of the same Weierstrass function $P$: the expression for $y^-_4$ is obtained from that for $y^+_4$ in Theorem \ref{y4} by substituting $- \kappa$ for $\kappa$ throughout. Alternatively, these other solutions may be obtained from $y^+_4$ by half-period shifts. 

\medbreak 

\begin{theorem} \label{shifts}
$y^+_4 (z + \OO) = - y^+_4(z)$ and $y^+_4 (z + \OO') = y^-_4 (z)$. 
\end{theorem}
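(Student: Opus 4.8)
The plan is to leverage two structural features of the defining equation $(y')^2 = 8y^4 - 8y^2 + 2\lambda^2$: it is autonomous, so translates of solutions are solutions, and its right-hand side $Q(y) := 8y^4 - 8y^2 + 2\lambda^2$ is even, so $-y$ solves it whenever $y$ does. Combined with the midpoint values $y^+_4(\OO) = -\mu^+$ and $y^+_4(\OO') = \mu^-$ recorded just before the statement, these symmetries will pin down the two translates, once I have in hand a uniqueness principle for the equation at the turning points $\pm\mu^{\pm}$.

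First I would isolate that uniqueness principle. Because the four zeros $\pm\mu^{\pm}$ of $Q$ are simple, $Q'(c) \neq 0$ at each of them; and if $y$ is a nonconstant local analytic solution with $y(0) = c$ a zero of $Q$, then $y'(0)^2 = Q(c) = 0$ forces $y'(0) = 0$, so $y(z) = c + a_k z^k + \cdots$ with $k \geq 2$ and $a_k \neq 0$. Matching lowest-order terms in $(y')^2 = Q'(c)(y - c) + O((y-c)^2)$ gives $2k - 2 = k$, hence $k = 2$ and $a_2 = \tfrac14 Q'(c)$, after which every further Taylor coefficient of $y$ is determined recursively. Thus at each such $c$ there is exactly one nonconstant solution germ with initial value $c$; since $y^+_4$ and $y^-_4$ are nonconstant elliptic, each is \emph{the} such germ for $c = \mu^+$, resp. $c = \mu^-$. (Applying the principle to $z \mapsto y^+_4(-z)$ shows, as a by-product, that $y^+_4$ and $y^-_4$ are even.)

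With this in hand the two identities drop out. For the $\OO$-shift: the map $z \mapsto -y^+_4(z + \OO)$ is nonconstant and, by evenness of $Q$ together with autonomy, again solves $(y')^2 = Q(y)$; its value at $z = 0$ is $-y^+_4(\OO) = \mu^+$, so by the uniqueness principle it coincides with $y^+_4$, which is precisely $y^+_4(z + \OO) = -y^+_4(z)$. For the $\OO'$-shift: $z \mapsto y^+_4(z + \OO')$ is a nonconstant solution of the same equation with value $y^+_4(\OO') = \mu^-$ at $z = 0$, so by the same principle it coincides with $y^-_4$.

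The one genuine obstacle is the turning-point uniqueness principle itself: since $y'(0) = 0$ at a zero of $Q$, the elementary ``$y' = \pm\sqrt{Q(y)}$'' first-order uniqueness does not apply directly, and one must run the power-series comparison above. An alternative that avoids it is to argue entirely through the coperiodic Weierstrass function of Theorem \ref{y4}: the standard half-period addition formula gives $P(z + \OO) = \tfrac43 + 4\kappa^2/(P(z) - \tfrac43)$ (using $P(\OO) = \tfrac43$ and the product of the gaps $\tfrac43 - (-\tfrac23 \pm 2\lambda)$ being $4\kappa^2$), together with the analogous expression for $P(z+\OO')$ from the values $P(\OO') = -\tfrac23 - 2\lambda$, $P(\OO+\OO') = -\tfrac23 + 2\lambda$; substituting these into the formula of Theorem \ref{y4} and into its $\kappa \mapsto -\kappa$ counterpart (which represents $y^-_4$, the invariants $G_2, G_3$ being even in $\kappa$) reduces each claimed identity to a rational-function identity in $P(z)$. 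Either route, everything beyond the uniqueness input is a matter of reading off values already listed.
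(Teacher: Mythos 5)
Your proof is correct and follows essentially the same route as the paper's one-line argument: the equation is autonomous, so the translates $z \mapsto y^+_4(z+\OO)$ and $z \mapsto y^+_4(z+\OO')$ solve it, and their initial values $-\mu^+$ and $\mu^-$ identify them with $-y^+_4$ and $y^-_4$ respectively. Your power-series lemma establishing uniqueness of the nonconstant solution germ at a simple zero of the quartic simply makes explicit what the paper leaves implicit in calling these ``the'' solutions, and your sketched alternative via the half-period addition formula for $P$ is a valid cross-check rather than a different proof.
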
 

\begin{proof} 
As the differential equation for $y^+_4$ is autonomous, it is also satisfied by the function $f : z \mapsto y^+_4 (z + \OO)$; moreover, $f(0) = y^+_4(\OO) = - \mu^+$. It follows that $f = - y^+_4$; similarly for the $\OO '$ shift. Of course, shifting $y^+_4$ by $\OO + \OO '$ produces $- y^-_4$. 
\end{proof} 

\medbreak 

If $b \in \mathbb{C}$ is not one of the four values $\pm \mu^{\pm}$ then the initial value problem 
$$(y ')^2 = T_4 (y) - (1 - 2 \lambda^2), \; y(0) = b$$ 
has two solutions: as $y^+_4$ is elliptic of order two and $b$ is not one of its double values, there exist (modulo periods) two $a \in \mathbb{C}$ such that $y^+_4 (a) = b$; for each of these, the initial value problem is solved by $z \mapsto y^+_4( z + a )$. 

\medbreak

In particular, the initial value problem 
$$(y ')^2 = T_4 (y) - (1 - 2 \lambda^2), \; y(0) = 0$$ 
has two solutions. In this case, the lack of uniqueness is immediately obvious, for if $y$ is a solution then $- y$ is also a solution. In order to present these solutions explicitly as translates of $y^+_4$ we proceed to locate its zeros; at the same time, we locate its poles. 

\medbreak 

The midpoint values recorded after Theorem \ref {y4} make it clear that $y^+_4$ changes sign along the upper and lower edges of the half-period rectangle. In fact, the sign change occurs exactly half-way along each edge. 

\medbreak 

\begin{theorem} \label{y4zero}
The function $y^+_4$ has a zero at $\tfrac{1}{2} \OO + \OO'$ and a pole at $\tfrac{1}{2} \OO$. 
\end{theorem}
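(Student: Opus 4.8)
The plan is to start from the closed form of Theorem~\ref{y4}, rewritten as a single rational expression in $P$: since
\[
1 + \frac{4\kappa}{P - (\tfrac{4}{3} + 2\kappa)} = \frac{P - (\tfrac{4}{3} - 2\kappa)}{P - (\tfrac{4}{3} + 2\kappa)},
\]
we have $y^+_4 = \mu^+ \,(P - c_1)/(P - c_2)$ where $\mu^+ = \sqrt{\tfrac{1}{2}(1 + \kappa)}$, $c_1 = \tfrac{4}{3} - 2\kappa$ and $c_2 = \tfrac{4}{3} + 2\kappa$. Thus the poles of $y^+_4$ are exactly the points at which $P$ takes the value $c_2$ (at the lattice points, where $P$ itself has poles, $y^+_4$ reverts to the finite value $\mu^+$), and the zeros of $y^+_4$ are exactly the points at which $P$ takes the value $c_1$. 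Moreover $y^+_4$, being a rational function of the even function $P$, is itself even, and it has order two as already remarked. The theorem therefore reduces to the two evaluations $P(\tfrac{1}{2}\OO) = c_2$ and $P(\tfrac{1}{2}\OO + \OO') = c_1$.

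First I would show that each of $y^+_4(\tfrac{1}{2}\OO)$ and $y^+_4(\tfrac{1}{2}\OO + \OO')$ equals $0$ or $\infty$. This is forced by evenness together with the half-period shift $y^+_4(z + \OO) = - y^+_4(z)$ of Theorem~\ref{shifts}: putting $z = - \tfrac{1}{2}\OO$ and then using evenness gives $y^+_4(\tfrac{1}{2}\OO) = - y^+_4(- \tfrac{1}{2}\OO) = - y^+_4(\tfrac{1}{2}\OO)$; putting $z = - \tfrac{1}{2}\OO + \OO'$ and then using evenness and $2\OO'$-periodicity gives $y^+_4(\tfrac{1}{2}\OO + \OO') = - y^+_4(- \tfrac{1}{2}\OO + \OO') = - y^+_4(\tfrac{1}{2}\OO + \OO')$. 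A point of $\C \cup \{\infty\}$ equal to its own negative is $0$ or $\infty$.

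Then I would distinguish pole from zero by examining the real edge $[0, \OO]$. On this edge $P$ is real and, by the behaviour of a real Weierstrass function of positive discriminant on its half-period rectangle --- exactly as recorded in the excerpt for $p$ and valid equally for $P$ --- it decreases strictly from $P(0) = \infty$ through all values down to $P(\OO) = e_1 = \tfrac{4}{3}$. Since $c_2 = \tfrac{4}{3} + 2\kappa$ lies in the open range $(\tfrac{4}{3}, \infty)$ while $c_1 = \tfrac{4}{3} - 2\kappa$ lies below it, the function $y^+_4 = \mu^+(P - c_1)/(P - c_2)$ has on $[0, \OO]$ exactly one pole --- at the unique point where $P = c_2$ --- and no zeros (the endpoint values $\pm\mu^+$ being nonzero). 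Now $\tfrac{1}{2}\OO$ lies in $[0, \OO]$ and, by the previous paragraph, $y^+_4(\tfrac{1}{2}\OO) \in \{0, \infty\}$; it cannot be a zero, so it is the pole, and $P(\tfrac{1}{2}\OO) = c_2 = \tfrac{4}{3} + 2\kappa$. Since $y^+_4$ is even of order two, its only poles modulo the period lattice $2\OO\,\mathbb{Z} + 2\OO'\,\mathbb{Z}$ are $\pm\tfrac{1}{2}\OO$; the point $\tfrac{1}{2}\OO + \OO'$ is congruent to neither of these, hence is not a pole, hence --- lying in $\{0, \infty\}$ --- is a zero, so $P(\tfrac{1}{2}\OO + \OO') = c_1 = \tfrac{4}{3} - 2\kappa$, as required.

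The only real content here is forcing the pole and the zero to sit at the precise midpoints of the edges rather than merely somewhere along them, and the evenness together with the shift of Theorem~\ref{shifts} is exactly what achieves this; the remainder is bookkeeping with the order of $y^+_4$ and the monotonicity of $P$. An alternative for the midpoint step is the classical bisection formula $\wp(\tfrac{1}{2}\OO) = e_1 + \sqrt{(e_1 - e_2)(e_1 - e_3)}$ which, with the recorded vertex values $e_1 = \tfrac{4}{3}$, $e_2 = - \tfrac{2}{3} + 2\lambda$, $e_3 = - \tfrac{2}{3} - 2\lambda$, evaluates to $\tfrac{4}{3} + 2\sqrt{1 - \lambda^2} = \tfrac{4}{3} + 2\kappa$ (the sign being fixed by $P > e_1$ on $(0, \OO)$); but the argument above has the advantage of invoking only facts already established here. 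As a by-product, the same reasoning locates all of the zeros and poles of $y^+_4$: modulo periods its poles are $\pm\tfrac{1}{2}\OO$ and its zeros are $\pm\tfrac{1}{2}\OO + \OO'$.
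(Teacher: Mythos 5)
Your proof is correct, but it takes a genuinely different route from the paper's. The paper simply imports the half-period (bisection) evaluations $\wp(\tfrac{1}{2}\OO) = e_1 + \sqrt{(e_1-e_2)(e_1-e_3)}$ and $\wp(\tfrac{1}{2}\OO + \OO') = e_1 - \sqrt{(e_1-e_2)(e_1-e_3)}$ from Table XVIII of Hancock [1910], substitutes the recorded midpoint values of $P$ to get $P(\tfrac{1}{2}\OO) = \tfrac{4}{3} + 2\kappa$ and $P(\tfrac{1}{2}\OO + \OO') = \tfrac{4}{3} - 2\kappa$, and reads the pole and zero off Theorem \ref{y4} --- exactly the ``alternative'' you mention only in passing. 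You instead argue internally: evenness of $y^+_4$ (as a rational function of the even $P$) combined with the antisymmetry $y^+_4(z+\OO) = -y^+_4(z)$ of Theorem \ref{shifts} forces the values at the two edge-midpoints into $\{0,\infty\}$; monotonicity of $P$ along $[0,\OO]$ (so that $P \geqslant \tfrac{4}{3}$ there, which excludes the zero-level $\tfrac{4}{3}-2\kappa$) rules out a zero at $\tfrac{1}{2}\OO$ and so identifies it as the pole; and the order-two count together with evenness shows the poles are exhausted by $\pm\tfrac{1}{2}\OO$ modulo periods, so $\tfrac{1}{2}\OO + \OO'$, not being a pole, must be the zero. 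Each step you invoke (evenness, the shift, the order of $y^+_4$, the behaviour of a real rectangular Weierstrass function on the edge) is available in the paper, so the argument is sound. What your route buys is self-containment: no appeal to the classical table, and the bisection values $P(\tfrac{1}{2}\OO) = \tfrac{4}{3}+2\kappa$, $P(\tfrac{1}{2}\OO+\OO') = \tfrac{4}{3}-2\kappa$ emerge as corollaries rather than inputs. What the paper's route buys is brevity and explicitness --- the values at the quarter-points are computed directly rather than inferred --- and both routes equally deliver the classification of all zeros and poles modulo periods that is noted after the theorem.
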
 

\begin{proof} 
We import some standard Weierstrass function evaluations from Table XVIII on page 467 of [1910]: if $\wp$ has fundamental half-periods $\OO, \OO '$ and midpoint values $e_1 > e_2 > e_3$ then 
$$\wp(\tfrac{1}{2} \OO + \OO ') = e_1 - \sqrt{(e_1 - e_2)(e_1 - e_3)}$$
and 
$$\wp(\tfrac{1}{2} \OO) = e_1 + \sqrt{(e_1 - e_2)(e_1 - e_3)}.$$
As noted after Theorem \ref{y4}, the Weierstrass function $P$ has midpoint values 
$$e_1 = \tfrac{4}{3}, \, e_2 = - \tfrac{2}{3} + 2 \lambda, \, e_3 = - \tfrac{2}{3} - 2 \lambda$$ 
whence by substitution 
$$P( \tfrac{1}{2} \OO + \OO ') = \tfrac{4}{3} - 2 \kappa$$
and 
$$P(\tfrac{1}{2} \OO) = \tfrac{4}{3} + 2 \kappa.$$ 
The proof is completed by inserting these values in Theorem \ref{y4}. 
\end{proof} 

\medbreak 

It is easily verified that each of these is simple; further, that each zero is congruent to $\pm (\tfrac{1}{2} \OO + \OO ')$ and each pole to $\pm \tfrac{1}{2} \OO$ (modulo periods). 

\medbreak 

\begin{theorem} \label{y0}
A solution to the initial value problem 
$$(y ')^2 = T_4 (y) - (1 - 2 \lambda^2), \; y(0) = 0$$ 
is given by the function 
$$z \mapsto y^+_4 (z + \tfrac{1}{2} \OO + \OO ').$$ 
\end{theorem}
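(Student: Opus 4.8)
The plan is to lean entirely on the fact that the differential equation is autonomous, together with the zero of $y^+_4$ located in Theorem \ref{y4zero}. First I would note that the equation $(y')^2 = T_4(y) - (1 - 2\lambda^2)$ contains no explicit occurrence of the independent variable, so that whenever $y$ is a solution, so is each translate $z \mapsto y(z + c)$; taking $y = y^+_4$ and $c = \tfrac{1}{2}\OO + \OO'$ shows at once that the proposed function satisfies the differential equation.

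It then remains only to check the initial condition, and here Theorem \ref{y4zero} does all the work: since $y^+_4$ has a zero at $\tfrac{1}{2}\OO + \OO'$, the translate $z \mapsto y^+_4(z + \tfrac{1}{2}\OO + \OO')$ takes the value $0$ at $z = 0$, as required. That completes the argument.

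There is no real obstacle here: the substance is already packaged in Theorem \ref{y4zero} (which in turn rested on the standard half-period evaluations of $\wp$). The only point perhaps worth flagging is that we are exhibiting \emph{a} solution, not \emph{the} solution — the initial value problem with $y(0) = 0$ admits no uniqueness, its two solutions being the function above and its negative — so no uniqueness statement is needed or made.
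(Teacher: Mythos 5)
Your argument is correct and is precisely the paper's own proof: the paper disposes of the statement by noting it is immediate from Theorem \ref{y4zero} together with the differential equation being autonomous, which is exactly the translation-invariance plus initial-condition check you spell out. Your closing remark on the absence of uniqueness also matches the paper's surrounding discussion, so nothing needs amending.
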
 

\begin{proof} 
Immediate from Theorem \ref{y4zero}, the differential equation being autonomous.  
\end{proof} 

\medbreak 

The other solution is of course the negative of this solution; it may also be written as $z \mapsto y^+_4 (z - \tfrac{1}{2} \OO - \OO ')$ in keeping with the fact that an $\OO$ shift converts $y^+_4$ to its negative. 

\medbreak 

\section{Relations} 

\medbreak 

It is apparent from Theorem \ref{p} and Theorem \ref{y4} that the Weierstrass functions $p$ and $P$ are intimately related: their respective invariants make this abundantly clear. Explicitly, they are related as follows.  

\medbreak 

\begin{theorem} \label{pP}
$P(z) = - 4 \, p (2 \, \ii \, z).$
\end{theorem}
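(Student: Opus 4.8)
The plan is to read the identity directly off the homogeneity (scaling) law for the Weierstrass function: for every nonzero $\mu \in \C$,
$$\wp(\mu z; g_2, g_3) = \mu^{-2} \, \wp(z; \mu^4 g_2, \mu^6 g_3),$$
an identity of meromorphic functions in $z$. (This is the classical relation $\wp(\mu z; \mu \Lambda) = \mu^{-2} \wp(z; \Lambda)$, rewritten by means of the homogeneities $g_2(\mu \Lambda) = \mu^{-4} g_2(\Lambda)$ and $g_3(\mu \Lambda) = \mu^{-6} g_3(\Lambda)$; it may also be checked termwise against the Laurent expansion $\wp(z; g_2, g_3) = z^{-2} + \tfrac{1}{20} g_2 z^2 + \tfrac{1}{28} g_3 z^4 + \cdots$.) Everything then comes down to choosing the scaling factor $\mu$ and verifying that the invariants of $p$ and $P$ match up.

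First I would compare the two sets of invariants furnished by Theorem \ref{p} and Theorem \ref{y4}:
$$G_2 = \tfrac{16}{3}(1 + 3 \lambda^2) = 16 \cdot \tfrac{1}{3}(3 \lambda^2 + 1) = 16 \, g_2, \qquad G_3 = \tfrac{64}{27}(1 - 9 \lambda^2) = -64 \cdot \tfrac{1}{27}(9 \lambda^2 - 1) = -64 \, g_3.$$
So I seek $\mu$ with $\mu^4 = 16$ and $\mu^6 = -64$ simultaneously; the first equation gives $\mu^2 = \pm 4$, and the second then forces $\mu^2 = -4$, so $\mu = \pm 2 \ii$. Taking $\mu = 2 \ii$ in the homogeneity law, with $(2\ii)^{-2} = -\tfrac14$, $(2\ii)^4 = 16$ and $(2\ii)^6 = -64$, I obtain
$$p(2 \ii z) = \wp(2 \ii z; g_2, g_3) = -\tfrac14 \, \wp(z; 16 g_2, -64 g_3) = -\tfrac14 \, \wp(z; G_2, G_3) = -\tfrac14 \, P(z),$$
which rearranges at once to $P(z) = - 4 \, p(2 \ii z)$. (The choice $\mu = -2 \ii$ gives the same conclusion, since $p$ is even.)

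I do not anticipate any genuine obstacle: the only points needing care are the handling of the complex scaling factor $2\ii$ and the bookkeeping of its even powers. As a consistency check I would note that $z \mapsto 2 \ii z$ carries the period lattice $\langle 2 \oo, 2 \oo' \rangle$ of $p$ to the lattice $\langle - \ii \, \oo, \, - \ii \, \oo' \rangle$ of $P$, which --- since $\oo$ is real and $\oo'$ purely imaginary --- is again rectangular with one real and one imaginary generator, in agreement with the shape asserted for the lattice of $P$ (here $\OO$ corresponds to $- \tfrac{\ii}{2} \oo'$ and $\OO'$ to $\tfrac{\ii}{2} \oo$; compare Theorem \ref{ooOO}). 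Under this correspondence the midpoint values recorded after Theorem \ref{y4} drop out of those recorded after Theorem \ref{p}: for instance $P(\OO) = - 4 \, p(\oo') = - 4 \, (- \tfrac13) = \tfrac43$, and the discriminants scale correctly, $G_2^3 - 27 G_3^2 = 16^3 g_2^3 - 27 \cdot 64^2 g_3^2 = 4096 \, (g_2^3 - 27 g_3^2) = (2\ii)^{12}(g_2^3 - 27 g_3^2)$.
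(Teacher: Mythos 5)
Your proposal is correct and is essentially the paper's own argument: both rest on the homogeneity law for $\wp$ (yours written as $\wp(\mu z; g_2, g_3) = \mu^{-2}\wp(z; \mu^4 g_2, \mu^6 g_3)$, the paper's as $\wp(z; \gamma^4 g_2, \gamma^6 g_3) = \gamma^2 \wp(\gamma z; g_2, g_3)$, the same identity rearranged), followed by matching the invariants via $G_2 = 16\, g_2$, $G_3 = -64\, g_3$ and the choice $\mu = \pm 2\,\ii$. Your added consistency checks (lattice correspondence, midpoint values, discriminant scaling) go beyond the paper's proof but are accurate and harmless.
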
 

\begin{proof} 
The homogeneity relation 
$$\wp( z; \gamma^4 g_2, \gamma^6 g_3) = \gamma^2 \wp(\gamma z; g_2, g_3)$$
is satisfied by any Weierstrass function. Apply it to the Weierstrass function $\wp = p$ in Theorem \ref{p} with invariants $g_2 = \tfrac{1}{3} (3 \lambda^2 + 1)$ and $\tfrac{1}{27} (9 \lambda^2 - 1)$: the choices $\gamma = \pm 2 \, \ii$ yield $\gamma^4 g_2 = G_2$ and $\gamma^6 g_3 = G_3$ as in Theorem \ref{y4}, so the homogeneity relation above reads $P(z) = - 4 \, p (2 \, \ii \, z)$.
\end{proof} 

\medbreak 

Naturally, this relationship between Weierstrass functions has consequences. The consequent relationship between $\dd$ and $y^+_4$ is readily deduced. 

\medbreak 

\begin{theorem} \label{ddy4}
$$\dd (2 \ii z) = 1 +\kappa \, \frac{y^+_4 (z) - \mu^+}{y^+_4 (z) + \mu^+}.$$
\end{theorem}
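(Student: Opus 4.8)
The plan is to chase the stated identity backwards from the right-hand side, reducing everything to Weierstrass functions via Theorem \ref{y4} and Theorem \ref{p}, and then apply the homogeneity bridge of Theorem \ref{pP}. First I would simplify the Möbius-type combination of $y^+_4$ that appears on the right. Writing $\mu^+ = \sqrt{\tfrac12(1+\kappa)}$ and abbreviating $Q = P - (\tfrac43 + 2\kappa)$, Theorem \ref{y4} gives $y^+_4 - \mu^+ = \mu^+ \cdot \tfrac{4\kappa}{Q}$ and $y^+_4 + \mu^+ = \mu^+ \cdot \tfrac{2Q + 4\kappa}{Q} = \mu^+\cdot\tfrac{2P - \tfrac83}{Q}$, so that the quotient collapses to
$$\frac{y^+_4(z) - \mu^+}{y^+_4(z) + \mu^+} = \frac{4\kappa}{2P(z) - \tfrac83} = \frac{2\kappa}{P(z) - \tfrac43}.$$
Hence the right-hand side of the asserted identity equals $1 + \dfrac{2\kappa^2}{P(z) - \tfrac43}$, with the $\mu^+$ factors having cancelled — a pleasant simplification that removes the radical.

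Next I would substitute the relation between the two Weierstrass functions. By Theorem \ref{pP}, $P(z) = -4\,p(2\ii z)$, so $P(z) - \tfrac43 = -4\big(p(2\ii z) + \tfrac13\big)$, and therefore
$$1 + \frac{2\kappa^2}{P(z) - \tfrac43} = 1 - \frac{\kappa^2/2}{p(2\ii z) + \tfrac13}.$$
Finally, Theorem \ref{p} rearranges to $\dd(w) = 1 - \dfrac{\tfrac12\kappa^2}{\tfrac13 + p(w)}$; taking $w = 2\ii z$ identifies the last display as $\dd(2\ii z)$, which closes the argument.

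I do not anticipate a genuine obstacle here: the entire proof is a short rational-function manipulation together with two cited identities, and the only place to be careful is bookkeeping the constants — in particular checking that the $+\,2\kappa$ and $-\,2\kappa$ shifts in the denominators of Theorem \ref{y4} combine correctly so that the $\kappa$-dependence in the denominator disappears after forming the quotient. It is perhaps worth remarking, as a sanity check, that the midpoint dictionaries recorded after Theorems \ref{p} and \ref{y4} are consistent with the formula: e.g. $z = \tfrac12\OO + \OO'$ (a zero of $y^+_4$, by Theorem \ref{y4zero}) should send the right-hand side to $1 - \kappa$, matching the value of $\dd$ at the corresponding point of its own period rectangle; one could include such a check but it is not needed for the proof.
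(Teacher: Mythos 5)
Your proposal is correct and follows exactly the route the paper takes: its proof of Theorem \ref{ddy4} is the one-line instruction to use Theorem \ref{pP} to eliminate the $\wp$ functions between Theorem \ref{p} and Theorem \ref{y4}, which is precisely the computation you carry out (with the constants checking out, e.g. $2Q + 4\kappa = 2P - \tfrac{8}{3}$ and $P - \tfrac{4}{3} = -4\bigl(p(2\ii z) + \tfrac{1}{3}\bigr)$). You have simply made the algebra explicit, and your closing midpoint sanity check is consistent as well.
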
 

\begin{proof} 
Use Theorem \ref{pP} to eliminate the $\wp$ functions between Theorem \ref{p} and Theorem \ref{y4}. 
\end{proof} 

\medbreak 

\medbreak 

 Another consequence is the fact that we may at once deduce expressions for the fundamental half-periods of $y^+_4$ (equivalently, of $P$) from our expressions for those of $\dd$ (equivalently, of $p$). 

\medbreak 

\begin{theorem} \label{ooOO}
$\oo = - 2 \, \ii \, \OO'$ and $\oo' = 2 \, \ii \, \OO$. 
\end{theorem}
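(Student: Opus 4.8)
The plan is to transport the period lattice of $p$ to that of $P$ by means of the functional relation $P(z) = -4\,p(2\,\ii\,z)$ established in Theorem \ref{pP}, and then to match the generators so obtained against the normalization that distinguishes the half-periods $\OO$ and $\OO'$.

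First I would note that a complex number $\Lambda$ is a period of $P$ precisely when $-4\,p(2\,\ii\,z + 2\,\ii\,\Lambda) = -4\,p(2\,\ii\,z)$ for all $z$, that is, precisely when $2\,\ii\,\Lambda$ is a period of $p$. Since the period lattice of $p$ is $2\,\mathbb{Z}\,\oo + 2\,\mathbb{Z}\,\oo'$, the period lattice of $P$ is therefore $\tfrac{1}{2\,\ii}\big(2\,\mathbb{Z}\,\oo + 2\,\mathbb{Z}\,\oo'\big) = \mathbb{Z}\,(-\ii\,\oo) + \mathbb{Z}\,(-\ii\,\oo')$. In particular $-\ii\,\oo$ and $-\ii\,\oo'$ are periods of $P$.

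Next I would bring in the normalization. Because $\oo$ is real and strictly positive while $\oo'$ is purely imaginary with strictly positive imaginary part, the number $-\ii\,\oo'$ is real and strictly positive, whereas $-\ii\,\oo$ is purely imaginary with strictly negative imaginary part. Hence $\ii\,\oo = -(-\ii\,\oo)$ is purely imaginary with strictly positive imaginary part, and the pair $(-\ii\,\oo',\, \ii\,\oo)$ still generates the lattice $\mathbb{Z}\,(-\ii\,\oo) + \mathbb{Z}\,(-\ii\,\oo')$. By the very definition of $(2\,\OO, 2\,\OO')$ --- the fundamental pair of periods of $P$ whose first member is strictly positive and whose second member is purely imaginary with strictly positive imaginary part --- we must have $2\,\OO = -\ii\,\oo'$ and $2\,\OO' = \ii\,\oo$. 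These identities rearrange at once to $\oo' = 2\,\ii\,\OO$ and $\oo = -2\,\ii\,\OO'$, as claimed.

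There is no genuine obstacle here; the only place calling for (routine) care is keeping track of the factor $2\,\ii$ and of the signs, together with the observation that $(-\ii\,\oo',\, \ii\,\oo)$ is a \emph{fundamental} pair and not a basis of a proper sublattice --- which is clear, since it differs from the plainly fundamental pair $(-\ii\,\oo,\, -\ii\,\oo')$ only by a coordinate swap and a sign. As a consistency check one may feed the explicit expressions for $\oo$ and $\oo'$ obtained above into these relations and verify that the resulting $\OO$ and $\OO'$ are compatible with the midpoint values of $P$ listed after Theorem \ref{y4}.
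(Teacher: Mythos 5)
Your proof is correct and follows the paper's own route: the paper likewise deduces the result from Theorem \ref{pP} via the observation that $z$ is a period of $P$ exactly when $2\,\ii\,z$ is a period of $p$, and you have merely spelled out the lattice transport and the sign/normalization bookkeeping that the paper leaves implicit.
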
 

\begin{proof} 
Theorem \ref{pP} tells us that $z$ is a period of $P$ precisely when $2 \ii  z$ is a period of $p$.  
\end{proof} 

\medbreak 

As regards their period lattices, the passage from $\dd$ to $y^+_4$ involves not only a scaling but also a quarter-rotation; this rotation is visible in the period-ratios. Recall from the close of Section 1 the period-ratio 
$$\frac{\oo '}{\oo} = \ii \, \sqrt2 \; \frac{F(\tfrac{1}{4}, \tfrac{3}{4}; 1 ; \lambda^2)}{F(\tfrac{1}{4}, \tfrac{3}{4}; 1; \kappa^2)}$$
where $\dd$ is constructed using the modulus $\kappa$. Theorem \ref{ooOO} allows us to deduce from this the period-ratio
$$\frac{\OO '}{\OO} = \ii \, \frac{1}{\sqrt2} \; \frac{F(\tfrac{1}{4}, \tfrac{3}{4}; 1 ; \kappa^2)}{F(\tfrac{1}{4}, \tfrac{3}{4}; 1; \lambda^2)}$$
where we recall that $y^+_4$ is defined using the parameter $\lambda$. 

\medbreak 

We close by remarking that there are other ways of passing between $\dd$ and $y^+_4$; here is one such. For the following, in addition to using $\kappa$ as the modulus for $\dd$ we shall also use $\kappa$ (rather than $\lambda$) and as the parameter for $y^+_4$: thus, $y^+_4$ now satisfies the initial value problem 
$$(y ' )^2 = T_4 (y) - (1 - 2 \kappa^2) \; \; {\rm and} \; \; y(0) = \sqrt{(1 + \lambda) / 2}.$$ 
In the one direction, the function 
$$z \mapsto 1 - 2 y^+_4 (\tfrac{1}{\sqrt8} z + \tfrac{1}{2} \OO + \OO ')^2$$ 
is $\dd$ because it satisfies the initial value problem in Theorem \ref{dn2}: it satisfies the differential equation therein by virtue of the differential equation now satisfied by $y^+_4$; it satisfies the initial condition on account of Theorem \ref{y4zero}, wherein the half-periods now involve $\kappa$ as parameter. In the opposite direction, the extraction of a square-root is required: we leave this as an exercise, merely noting that the function $1 - \dd$ has double poles (at points congruent to $\oo '$) and double zeros (at points congruent to $0$), whence it has square-roots that are meromorphic and indeed elliptic; by suitable scaling and shifts, we can arrange that these square-roots are $\pm y^+_4$.  

\medbreak 

\section*{Appendix} 

\medbreak 

For convenience, we explicitly record here the elliptic solutions to a class of first-order initial value problems. The material itself is essentially copied directly from Section 20.6 of [1927]; in effect, we have simply given the integral formulation in [1927] a differential cast. 

\medbreak 

We consider the differential equation $(w')^2 = f(w)$ in which the right-hand side is the quartic 
$$f(w) = a_0 w^4 + 4 a_1 w^3 + 6 a_2 w^2 + 4 a_3 w + a_4$$ 
with quadrinvariant  
$$g_2 = a_0 a_4 - 4 a_1 a_3 + 3 a_2^2$$ 
and cubinvariant  
$$g_3 = a_0 a_2 a_4 + 2 a_1 a_2 a_3 - a_2^3 - a_0 a_3^2 - a_1^2 a_4.$$
We solve this differential equation with initial value $w(0) = w_0$ such that $f(w_0) = 0$; we do so assuming that the zeros of $f$ are simple. 

\medbreak 

\begin{theorem*} \label{App}
The initial value problem 
$$(w')^2 = f(w), \; \; w(0) = w_0$$
has solution given by 
$$w (z) = w_0 + \frac{\frac{1}{4} f'(w_0)}{\wp(z ; g_2, g_3) - \frac{1}{24} f''(w_0)}\, .$$
\end{theorem*}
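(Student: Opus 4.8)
The plan is to reproduce the classical reduction of \S20.6 of [1927]: transform the quartic equation $(w')^2 = f(w)$ into the Weierstrass equation $(\wp')^2 = 4\wp^3 - g_2\wp - g_3$ by a fractional-linear change of the dependent variable anchored at the root $w_0$, and then read off the solution. First I would substitute $w = w_0 + 1/t$. Since $f$ is a quartic its Taylor expansion about $w_0$ terminates, and since $f(w_0) = 0$ its constant term is absent; substituting $w - w_0 = 1/t$ (so $w' = -t'/t^2$) and clearing $t^4$ converts $(w')^2 = f(w)$ into the \emph{cubic} equation
$$(t')^2 = f'(w_0)\,t^3 + \tfrac12 f''(w_0)\,t^2 + \tfrac16 f'''(w_0)\,t + \tfrac1{24} f^{(4)}(w_0).$$

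Next I would normalize this cubic by the affine substitution $t = \dfrac{1}{f'(w_0)}\bigl(4u - \tfrac16 f''(w_0)\bigr)$, the scaling chosen to produce leading coefficient $4$ and the shift chosen to annihilate the $u^2$ term, so that $u$ satisfies $(u')^2 = 4u^3 - \widetilde g_2\, u - \widetilde g_3$ for suitable constants. The one genuinely substantive step — and the step where I expect whatever friction there is to reside — is to check that $\widetilde g_2$ and $\widetilde g_3$ coincide with the quadrinvariant $g_2$ and the cubinvariant $g_3$ of $f$; this is precisely the invariance of the two basic invariants of a binary quartic under the projective substitution just carried out, and I would confirm it by direct expansion. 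En route I would note that, the zeros of $f$ being simple, the discriminant of $f$ is nonzero and hence so is $g_2^3 - 27 g_3^2$, so that $\wp(\,\cdot\,; g_2, g_3)$ is a genuine Weierstrass function.

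Finally, from $(u')^2 = 4u^3 - g_2 u - g_3$ we get $u = \wp(z + c; g_2, g_3)$ for some constant $c$, and the initial condition fixes it: $w(0) = w_0$ forces $t \to \infty$, hence $u \to \infty$, as $z \to 0$, so $c$ must be a pole of $\wp$, i.e.\ $c$ lies in the period lattice, and thus $u = \wp(z; g_2, g_3)$. Unwinding the two substitutions yields
$$\wp(z; g_2, g_3) = \frac{f'(w_0)}{4\,(w - w_0)} + \tfrac1{24} f''(w_0),$$
which rearranges at once into the claimed formula; that the construction runs in reverse — so the exhibited $w$ really does solve the stated initial value problem — is immediate, and the formula is visibly even in $z$, as befits an equation invariant under $z \mapsto -z$.
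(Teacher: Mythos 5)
Your proposal is correct and follows essentially the same route as the paper's proof (itself the reduction of Whittaker--Watson \S 20.6): the substitution $w = w_0 + 1/t$ reducing the quartic to a cubic, an affine normalization to Weierstrass form (you merge the paper's two steps $q = A_3 r$, $p = q + \tfrac12 A_2$ into one), the identification of the resulting invariants with $g_2, g_3$, and the pole at the origin fixing $u = \wp(z; g_2, g_3)$. The deferred check that the normalized cubic has invariants exactly $g_2, g_3$ is the same calculation the paper likewise leaves to the reader.
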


\begin{proof} 
The Taylor expansion of the quartic $f$ about $w_0$ is 
$$f(w) = A_0 (w - w_0)^4 + 4 A_1 (w - w_0)^3 + 6 A_2 (w - w_0)^2 + 4 A_3 (w - w_0)$$
where 
$$A_0 = a_0, \; A_1 = a_0 w_0 + a_1, \; A_2 = a_0 w_0^2 + 2 a_1 w_0 + a_2, A_3 = a_0 w_0^3 + 3 a_1 w_0^2 + 3 a_2 w_0 + a_3.$$
Put $r = (w - w_0)^{-1}$ to reduce the right-hand side to a cubic: 
$$(r')^2 = (w - w_0)^{-4} (w')^2 = r^4 (A_0 r^{-4} + 4 A_1 r^{-3} + 6 A_2 r^{-2} + 4 A_3 r^{-1})$$
or 
$$(r')^2 = 4 A_3 r^3 + 6 A_2 r^2 + 4 A_1 r + A_0.$$ 
Put $q = A_3 r$ to replace by $4$ the leading coefficient:  
$$(q')^2 = 4 q^3 + 6 A_2 q^2 + 4 A_1 A_3 q + A_0 A_3^2.$$ 
Put $p = q + \frac{1}{2} A_2$ to remove the quadratic term: 
$$(p')^2 = 4 p^3 - (3 A_2^2 - 4 A_1 A_3) p - (2 A_1 A_2 A_3 - A_3^2 - A_0 A_3^2)$$ 
wherein a calculation reveals that  
$$3 A_2^2 - 4 A_1 A_3 = g_2 \; \; {\rm and} \; \; 2 A_1 A_2 A_3 - A_3^2 - A_0 A_3^2 = g_3.$$ 
Thus 
$$(p')^2 = 4 p^3 - g_2 p - g_3$$ 
and the fact that $w = w_0$ at the origin gives $p$ a pole there; so $p = \wp(\bullet; g_2, g_3)$. Reverse the steps, noting that $12 A_2 = f''(w_0)$ and that $4 A_3 = f'(w_0) \neq 0$, to see that $w$ has the advertised dependence on $\wp$. 
\end{proof}

\medbreak

\medbreak

\bigbreak

\begin{center} 
{\small R}{\footnotesize EFERENCES}
\end{center} 
\medbreak 

[1910] H. Hancock, {\it Lectures on the Theory of Elliptic Functions}, John Wiley. 

\medbreak 

[1927] E.T. Whittaker and G.N. Watson, {\it A Course of Modern Analysis}, Fourth Edition, Cambridge University Press. 

\medbreak 

[1944] E.H. Neville, {\it Jacobian Elliptic Functions}, Oxford University Press. 

\medbreak 

[1953] A. Erdelyi (director), {\it Higher Transcendental Functions}, Volume 1, McGraw-Hill. 

\medbreak 

[2014] Li-Chien Shen, {\it On a theory of elliptic functions based on the incomplete integral of the hypergeometric function $_2 F_1 (\frac{1}{4}, \frac{3}{4} ; \frac{1}{2} ; z)$}, Ramanujan Journal {\bf 34} 209-225. 

\medbreak 

[2016] Li-Chien Shen, {\it On Three Differential Equations Associated with Chebyshev Polynomials of Degrees 3, 4 and 6}, Acta Mathematica Sinica, English Series {\bf 33} (1) 21-36. 

\medbreak 

[2020] P.L. Robinson, {\it The elliptic function ${\rm dn}_2$ of Shen}, arXiv 2009.04910. 

\medbreak

\end{document}